\newtheorem{lemma}{Lemma}
\newtheorem{theorem}{Theorem}
\newtheorem{proposition}{Proposition}
\newcommand{\R}{\mathbb{R}}
\newcommand{\Z}{\mathbb{Z}}
\newcommand{\N}{\mathbb{N}}
\newcommand{\one}{\vec {\mathbf 1}}
\newcommand{\equilibrioalfa}[3]{
\begin{tikzpicture}
      [scale=#1,place/.style={circle,draw=black,thick,fill=black, inner sep=0pt,minimum size=#2mm}]

    \foreach \x in {0,1,2,3}{
      \node (\x) at (90*\x-#3:2) [place] {};
      \node (\x') at (90*\x+#3:2) [place] {};
    };
    \foreach \x in {0,1,2,3}{
    \draw (\x)--(\x');
    \node  at (90*\x:2.2)  {$c$};
    }
    \foreach \x/\y in {0/1,1/2,2/3,3/0}{
    \draw (\x')--(\y);
    \node  at (45+90*\x:1.8)  {$a$};
    }
    \foreach \x/\y in {0/1,1/2,2/3,3/0}{
    \draw (\x)--(\y);
        \draw (\x')--(\y');
    \node  at (75+90*\x:1.7)  {$b$};
        \node  at (20+90*\x:1.6)  {$b$};
    }
    \foreach \x/\y in {0/1,1/2,2/3,3/0}{
    \draw (\x)--(\y');
    \node  at (45+90*\x:1)  {$d$};
    }
    \foreach \x/\y in {0/1,1/2,2/3,3/0}{
    \draw[dotted] (0,0)--(\x);
    \draw[dotted] (0,0)--(\x');
    \node  at (90*\x:0.7)  {$\beta$}; 
    \node  at (45+90*\x:0.4)  {$\alpha$}; 
    }

   \foreach \x/\y in {0/1,1/2,2/3,3/0}{
   \draw[thick, gray] (90*\x+#3:0.25) arc (90*\x+#3:90*\x+90-2*#3:0.25);
   \draw[thick, gray] (90*\x-#3:0.5) arc (90*\x-#3:90*\x+#3:0.5);   
}           
    \end{tikzpicture}  
}
\newcommand{\Gabcdconcrete}[4]{
\begin{tikzpicture}
      [scale=1.15,place/.style={circle,draw=black,thick,fill=black, inner sep=0pt,minimum size=2mm}]

    \foreach \x in {0,1,2,3}{
      \node (\x) at (90*\x-10:2) [place] {};
      \node (\x') at (90*\x+10:2) [place] {};
    };
    \foreach \x in {0,1,2,3}{
    \draw (\x)--(\x');
    \node[scale=0.6]  at (90*\x:2.2)  {$#3$};
    }
    \foreach \x/\y in {0/1,1/2,2/3,3/0}{
    \draw (\x')--(\y);
    \node[scale=0.6]  at (45+90*\x:1.8)  {$#1$};
    }
    \foreach \x/\y in {0/1,1/2,2/3,3/0}{
    \draw (\x)--(\y);
        \draw (\x')--(\y');
    \node[scale=0.6]  at (75+90*\x:1.7)  {$#2$};
        \node[scale=0.6]  at (20+90*\x:1.6)  {$#2$};
    }
    \foreach \x/\y in {0/1,1/2,2/3,3/0}{
    \draw (\x)--(\y');
    \node[scale=0.6]  at (45+90*\x:1)  {$#4$};
    }
    \end{tikzpicture} 
}
\newcommand{\Gabcdk}[7]{
\begin{tikzpicture}
      [scale=#7,place/.style={circle,draw=black,thick,fill=black, inner sep=0pt,minimum size=1mm},gris/.style={circle,draw=gray,thick,fill=gray, inner sep=0pt,minimum size=1mm}]

    \foreach \x/\y/\z in {0/0/7,1/2/1,2/4/3,3/6/5}{
        \foreach \k in {0,...,#5}{
      \node (\x\k) at    ($ (90*\x-15:2) + (90*\x-15+#6*\k:0.2) $)  [place] {};
      \node (\x'\k) at    ($ (90*\x+15:2) + (90*\x+15+#6*\k:0.2) $)  [place] {};
    };};
        \foreach \x in {0,1,2,3}{
        \foreach \k in {0,...,#5}{
          \foreach \kk in {0,...,#5}{
		\draw[thick] (\x\k)--(\x\kk);
		\draw[thick] (\x'\k)--(\x'\kk);
};    };};
    \foreach \x in {0,1,2,3}{
	\foreach \k in {0,1,...,#5}{
	\foreach \c in {0,...,#3}{
		\draw[thick, color=gray] (\x\k)to      ($ (90*\x+15:2) + (90*\x+15+#6*\c+#6*\k:0.2)$);
    };};};
    \foreach \x/\y in {0/1,1/2,2/3,3/0}{
	\foreach \k in {0,...,#5}{
	 \foreach \d in {0,...,#4}{
			    \draw (\x\k)-- ($ (90*\y+15:2) + (90*\y+15+#6*\d+#6*\k:0.2) $);

    };};};
    \foreach \x/\y in {0/1,1/2,2/3,3/0}{
    \foreach \k in {0,...,#5}{
	\foreach \a in {0,...,#1}{
				    \draw[gray] (\x'\k)-- ($ (90*\y-15:2) + (90*\y-15+#6*\a+#6*\k:0.2) $);
    };};};

   \foreach \x/\y in {0/1,1/2,2/3,3/0}{
            \foreach \k in {0,...,#5}{
                        \foreach \b in {0,...,#2}{
    \draw[dotted] (\x'\k)-- ($ (90*\y+15:2) + (90*\y+15+#6*\b+#6*\k:0.2) $);
    };};};

   \foreach \x/\y/\z in {0/0/7,1/2/1,2/4/3,3/6/5}{
        \foreach \k in {0,...,#5}{
      \node (\x\k) at    ($ (90*\x-15:2) + (90*\x-15+#6*\k:0.2) $)  [gris] {};
      \node[scale=1] at    ($ (90*\x-15:2) + (90*\x-15+#6*\k:0.4) $)   {\text{\tiny $(\z,\k)$}}; 
      \node[scale=1] at    ($ (90*\x+15:2) + (90*\x+15+#6*\k:0.4) $)   {\text{\tiny $(\y,\k)$}}; 
      \node (\x'\k) at    ($ (90*\x+15:2) + (90*\x+15+#6*\k:0.2) $)  [gris] {};
    };};

    \end{tikzpicture}
}
\begin{document}


\title{From weighted to unweighted graphs in Synchronizing Graph Theory.} 



\author{Eduardo A. Canale}
\email[]{eduardo.canale@gmail.com, canale@fing.edu.uy}
\affiliation{Instituto de Matem\'atica y Estad\'istica, Universidad de la Rep\'ublica, Uruguay}


\date{\today}

\begin{abstract}
A way to associate unweighted graphs from weighted ones is presented, such that linear stable equilibria of the Kuramoto homogeneous model associated to both graphs coincide, i.e., equilibria of  the system $\dot\theta_i  = \sum_{j \sim i} \sin(\theta_{j}-\theta_j)$, where $i\sim j$ means vertices $i$ and $j$ are adjacent in the corresponding graph.
As a consequence, the existence of  linearly stable equilibrium  is proved to be NP-Hard as conjectured by R. Taylor in 2015 and  a new lower bound for the minimum degree that ensures synchronization is found.

\end{abstract}

\pacs{}

\maketitle 
\begin{quotation}
Oscillator synchronization is a behavior found  in many complex systems, so it is of paramount interest  to understand the conditions that ensure it.
One fundamental piece of this puzzle is  the topology of the  interconnection network.
The homogeneous Kuramoto model stands out as a good model to answer that question, since its dynamics depends only on that topology.
Many concepts from  graph theory have been imported in order to shed light on this topic, like  biconnectivity,  twin vertices   and minimum degree. This last  parameter has been proved to imply, when large enough, that almost all initial conditions go to  an in-phase sync. How large it should be  is a limit   called \emph{critical connectivity} and it is known to be between 0.6838 and 0.75.
The  lower bound of 0.6838, has been improved in the last ten years  by theoretical and computational effort from an initial value of 0.6809 to 0.6818, then to 0.6828, and finally  to 0.6838.  All  these improvements were achieved by means of the search of circulant graphs with  no trivial linearly stable equilibria.
However the last of these  improvements close the possibility of finding new ones from circulant graphs.
In this work, we improve the lower bound to 0.6875 by means of non-circulant graphs.
Moreover, we have considered a  general Kuramoto model, with different strengths  between the oscillators and developed a way to assign a   model with equal strengths  that share its linearly stable equilibria. This technique also allows us to prove that the computational complexity of classifying equilibria by linear analysis is NP-Hard.
Therefore the method has opened a gate between the more general world of weighted graphs to the more restricted world of unweighted ones, which gives hope for a whole set of  new results in the field.
\end{quotation}

\section{Introduction}

In 2012 a seminal work by Richard Taylor \cite{taylor2012there} did a big step toward the classification of synchronizing graphs proving the existence of a critical value  $\mu_c<1$, called \emph{critical connectivity}\cite{Kassabov}  such that  almost any orbit of the homogeneous Kuramoto model associated with a graph  with $n$  vertices and minimum degree at least $\mu(n-1)$, ensure the graph is \emph{synchronizing}, i.e.,  almost any orbit should go to an equilibrium of the form $(c,c,\ldots,c)$. 
In that work, the author gives an upper and a lower bound for $\mu_c$. 
Both bounds were later  remarkably improved.
The upper bound was improved seven years later  from 0.9395 to 0.7929  by Ling, Xu and Bandeira \cite{ling}, then from 0.7929 to  0.7889 by Lu and Steinerberger \cite{Lu} and finally from 0.7889 to  0.75 by  Kassabov, Strogatz and  Townsend \cite{Kassabov}, in a  time-slaps of two years. 
On the other hand, the lower bound was improved  in 2015 from 0.6809 to 0.6818 by Canale and Monz\'on\cite{canale2015exotic} by means of Kroenecker  product of circulant graphs with complete ones, then from   0.6918 to 0.6828 by Townsend, Stillman and  Strogatz\cite{townsend2020dense}  by the exhaustive analysis of graphs with at most 50 vertices and, finally, in an what we consider an amazing work, from 0.6828 to 0.6838    by Yoneda,  Tatsukawa, and  Teramaer \cite{Yoneda}, were they take in count all possible circulant graphs, transform the problem in a family of  integer programing problems, solve  each of them  analytically to  find an optimal solution.
In the present work we increase  the  lower bound from $0.6838$ to $0.6875$  by giving an explicit example of a family of non synchronizing graphs.
Our construction comes from a kind of perturbation of the tensor product of $C_4$ with the complete graph suggested by Kassabov et al.\cite{Kassabov}, but going outside the family of circulant graphs. The construction is based on a way to transform weighted graph into unweighted ones.
As a collateral result we close an open problem on the complexity of classifying stable equilibria left by Taylor~\cite{Taylor2015}, proving that this decision problem is NP-Hard.

\section{Preliminaries}

\subsection{Graph theory}

A  \emph{graph} $G$ consists of a set $VG$ of \emph{vertices},  some of them \emph{joined} by  \emph{edges} in a set $EG$.
If two vertices $v$ and $w$ are joined by an edge $e$, we  say they are \emph{adjacent} and we write $e=vw$  or simply $v \sim w$ if no doubt about $G$ could arise.
In this work, all graphs are \emph{simple}, i.e. there are no edge of the form $vv$ and  no two different edges join the same vertices. 
The graph is a \emph{weighted graph}  if it comes  together with a \emph{weight function} $w:EG \to \N $. If $w$ is constant equal to 1, then the graph can be considered unweighted.

The \emph{order} $|G|$ of $G$ is the cardinality $|VG|$ of its vertex set while 
we  call  \emph{weight } of $G$ to the sum $w(G)$ of all its weights, i.e. $w(G) = \sum_{e\in EG} w_e$.
We will denote by $G_v$  the set  of vertices adjacent with $v$ in $G$, thus,   $w \in G_v$ iff $v \sim w$. 
The cardinality $|G_v|$ of $G_v$ is the \emph{degree} of $v$. 
The minimum degree amount the vertices of $G$ is denoted $\delta G$ and called \emph{minimum degree} of $G$, so
$
 \delta G = \min \{|G(v)|: v \in VG\}.
$
Let us call  \emph{strong density} to the ratio $\mu(G) = \delta G/(|G|-1)$.
In graph theory, the name \emph{density} is reserved for  $|EG|/\binom{|G|}{2}$, so, when the graph is regular, the strong and the ordinary density coincide.

Now, we will define a simple graph  from a weighted one.   Given a weighted graph $G$ and  a positive integer $k\geq \max w = \max_v w$ let us define the $k$-\emph{spinning} $S_k(G)$ of $G$ to be the graph with vertices $VG \times \Z_k$ and edges, 
$$
ES_k(G) = \{ (u,i)(v,j): uv \in EG,   j \in i+\N_{w_{uv}} \} \cup \{(u,i)(u,j): \forall i \neq j\},
$$
where  $\N_h = \{0,1,\ldots,h-1\}$, so $i+\N_{w_{uv}} =\{ i,i+1,\ldots, i+w_{uv}-1\} \}$. 
In Figure~\ref{Weighted and spinning graph} we show a weighted graph together  with its corresponding $2$-spinning graph.
\begin{figure}[htbp]
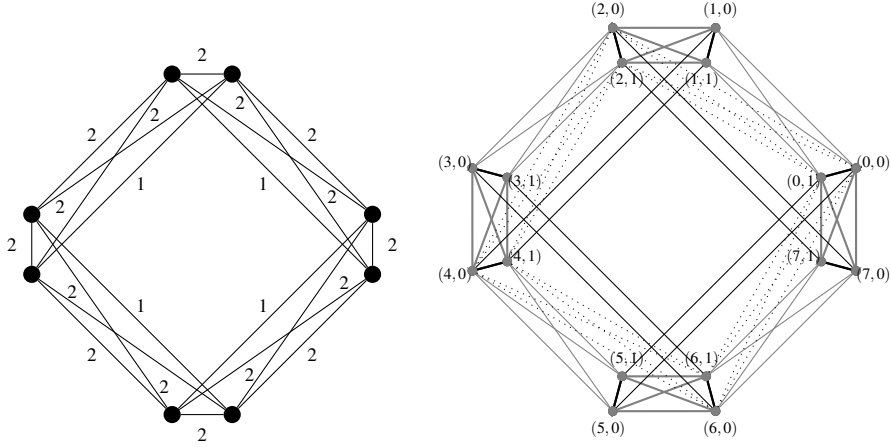

\begin{center}
\Gabcdconcrete{2}{2}{2}{1}
\Gabcdk{1}{1}{1}{0}{1}{180}{1.2}
\caption{A weighted graph on the left hand side with its $2$-spinning graph on the right hand side..}
\label{Weighted and spinning graph}
\end{center}
\end{figure}

Notice that the graph $G$ without weights is the quotient of $S_k(G)$ under the relation $(u,i)R(v,j) \iff u=v$, i.e. $G = S_k(G)/R$.
Besides, when all the weights are equal to $k$ we reobtain the Kronecker product of $G/R$ with the complete graph on $k$ vertices as described in previous works on this topic\cite{canale2015exotic,townsend2020dense}.

\subsection{Homogeneous Kuramoto Model}

Given a weighted graph $G$ with weight function $w$, 
the \emph{homogenous Kuramoto model of coupled oscillators  coupled through  $ G $}, is the system of differential equations  given by:
\begin{equation}\label{eq:Kuramoto}
\dot\theta_v = \sum_{u \in G_v} w_{uv} \sin(\theta_u - \theta_v) \qquad v \in VG.
\end{equation}

The system can be seen as running on the $n$-dimensional torus 
\( \mathbb{T}^n = (\R/2\pi\Z)^n \), which is a  compact manifold.
Besides, the system is  an analytic gradient one on $\mathbb{T}^n $, since  $\dot\theta = -\nabla U$,  for  $U$   the \emph{``potential energy''} defined as
$$
U(\theta_1,\ldots, \theta_n) = \frac12\sum_{uv\in EG} w_{uv}| e^{i\theta_u}-e^{i\theta_v}|^2
= w(G) - \sum_{uv \in EG} w_{uv}\cos(\theta_u  - \theta_v).
$$
Therefore, by the theory of analytic gradient systems\cite{Lojasiewicz}, every orbit goes to a consensus.
A point  $\theta \in \R^n$ is an  \emph{equilibrium} of  $G$ if it is an equilibrium of the system, i.e, if  \begin{equation}\label{eq:equilibrium}
\forall v \in V \qquad \sum_{u \in G_v} w_{uv}\sin(\theta_u - \theta_v) =0.
\end{equation}
Clearly $\theta   = (c,\ldots,c)$ are equilibrium and are called \emph{consensus}.

We say that a  graph \emph{synchronizes} iff  almost every solution tends to a consensus, i.e., if the set of initial conditions that give rise to orbits that do not tend to a consensus has  Lebesgue's measure zero. 
We known since the seminal work of Taylor\cite{taylor2012there}, that there is a limit $\mu_c<1$ called by Kassabov et al.  \emph{critical connectivity}\cite{Kassabov}, such that any graph with strong density greater than $\mu_c$ should synchronizes. We also known from the work of the latter authors\cite{Kassabov}, that the critical connectivity is at most 3/4.

Another important property of the system \eqref{eq:Kuramoto} is  the orthogonality of the solutions to  the vector $\one = (1,1,\ldots,1) $. Thus, the solutions are always running in a hyperplane orthogonal to $\one$, which is in fact a $(n-1)$--dimensional torus. So, we only care about the behavior inside these hyperplanes, though, for simplicity, the calculations will be done in $\R^n$. 

One way to see that a graph does not synchronizes is to prove that there exists a non consensus   equilibrium $\theta^ *$ where  the Hessian matrix  $U''_{\theta^*}$ of $U$ at $\theta^*$ has  all its  eigenvalue, but one,  positive.     Another way to express this condition is by 
 considering  the smaller eigenvalue $a(U''_{\theta^ *})$ in the hyperplane orthogonal to $\one$, which  is positive iff   all the eigenvalues are positive. Let us call  $a(U''_{\theta^*})$ the \emph{algebraic connectivity} of $\theta^*$ following the notation  in spectral graph theory where  \emph{algebraic connectivity} or also  \emph{Fiedler number} of the graph is reserved to the second smallest eigenvalue of the Laplacian matrix of the graph.  
The element $uv$ of $U''_{\theta^*}$ is
\begin{equation}\label{U''}
(U''_{\theta^*})_{uv} = \begin{cases}
- w_{uv}\cos(\theta^ *_u-\theta^ *_v) & u v\in EG, \\
\displaystyle\sum_{u\sim v} w_{uv}\cos(\theta^ *_u-\theta^ *_v) & u = v,\\
0 & \text{otherwise}.
\end{cases}
\end{equation}
Observe that, since $U''$ in a consensus coincides with the Laplacian matrix of the graph, then, the algebraic connectivity of a graph is the algebraic connectivity of any consensus.

\section{Equilibria of $k$-spinnings}
In this section we show a way to obtain equilibria of a spinning graph from equilibria of the original graph, as well as how the  former's linear stability  can be ensured by the stability of the last one.
Given a weighted graph $G$ and its $k$-spinning $S_k(G)$, if $\theta$ is an element of $\R^{|G|}$, let $\theta^\sigma$ be the element of $\R^{|G\times\Z_k|}$ given by
$$
\theta^\sigma_{(u,i)} = \theta_u \qquad \forall (u,i) \in G\times\Z_k.
$$

\begin{proposition}
A point  $\theta$ is an equilibrium of a weighted graph $G$  iff  the point $\theta^\sigma$ 
 is  an equilibrium of $S_k(G)$.
\end{proposition}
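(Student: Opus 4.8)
The plan is to unwind the equilibrium condition \eqref{eq:equilibrium} for $S_k(G)$ at the point $\theta^\sigma$ and show it collapses, vertex-class by vertex-class, to the equilibrium condition for $G$. The key observation is that $\theta^\sigma$ is constant on each fiber $\{u\}\times\Z_k$, so every term $\sin(\theta^\sigma_{(u,i)}-\theta^\sigma_{(u,j)}) = \sin(\theta_u-\theta_u)=0$: all the contributions coming from the "vertical" clique edges $(u,i)(u,j)$ vanish identically. Hence, for a vertex $(v,i)$ of $S_k(G)$, the defining sum $\sum_{(u,j)\sim (v,i)} \sin(\theta^\sigma_{(u,j)}-\theta^\sigma_{(v,i)})$ reduces to a sum over the "horizontal" edges, i.e.\ over $u\in G_v$ and, for each such $u$, over the $w_{uv}$ values of $j$ with $j\in i+\N_{w_{uv}}$ (together with the symmetric edges arising from $v\in G_u$). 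For each fixed neighbor $u$, all these $j$-terms are equal to $\sin(\theta_u-\theta_v)$, since $\theta^\sigma$ only sees the first coordinate; so they bundle into $w_{uv}\sin(\theta_u-\theta_v)$.

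First I would carefully spell out the structure of $ES_k(G)$ so that the neighborhood of $(v,i)$ is written as the disjoint union of the vertical clique on $\{v\}\times\Z_k$ and the horizontal neighbors; here one must be slightly careful that an edge $uv\in EG$ contributes to $(v,i)$ both as "$uv$ read with tail $v$" (giving $j\in i+\N_{w_{uv}}$) and as "$uv$ read with tail $u$" (giving $i\in j+\N_{w_{uv}}$, i.e.\ $j\in i-\N_{w_{uv}}$), but since $\sin(\theta_u-\theta_v)$ does not depend on which of these holds, each neighbor $u$ simply contributes a total coefficient that is a fixed positive integer multiple of $\sin(\theta_u-\theta_v)$ — and the precise multiplicity need not even be computed, only shown to be positive and independent of $i$. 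Then the equilibrium equation at $(v,i)$ becomes $\big(\text{positive const}\big)\cdot 0 + \sum_{u\in G_v} c_{uv}\sin(\theta_u-\theta_v)=0$ with $c_{uv}>0$.

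The final step is the double implication. If $\theta$ is an equilibrium of $G$, then actually $\sum_{u\in G_v} w_{uv}\sin(\theta_u-\theta_v)=0$ for each $v$; I would like the reduced sum at $(v,i)$ to be exactly this — which is cleanest if one shows $c_{uv}=w_{uv}$, i.e.\ that the edge $uv$ contributes exactly $w_{uv}$ horizontal half-edges to $(v,i)$ (a short counting check using $k\ge \max w$ so that $i+\N_{w_{uv}}$ has $w_{uv}$ distinct elements in $\Z_k$, and checking that the "tail-$u$" reading does not double-count). Then equilibrium of $\theta^\sigma$ is immediate. Conversely, if $\theta^\sigma$ is an equilibrium of $S_k(G)$, reading the equation at $(v,0)$ for every $v$ recovers exactly \eqref{eq:equilibrium}, so $\theta$ is an equilibrium of $G$. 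The main obstacle — really the only place demanding care — is the bookkeeping of edge multiplicities: making sure that summing over the two orientations of each $uv$ and over the index shifts produces precisely the weight $w_{uv}$ and not some other multiple, and that the condition $k\ge\max w$ is what guarantees no wrap-around collisions. Everything else is a direct substitution using that $\theta^\sigma$ is fiberwise constant.
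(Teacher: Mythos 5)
Your proposal is correct and takes essentially the same route as the paper: the paper's proof is exactly the one-line computation in which the fiber (clique) edges vanish because $\theta^\sigma$ is constant on each fiber, and the $w_{uv}$ horizontal neighbors of $(v,i)$ in the fiber of $u$ each contribute $\sin(\theta_u-\theta_v)$, reducing the equilibrium condition at $(v,i)$ to Eq.~(2) for $G$. Your extra bookkeeping about edge orientation and the role of $k\ge\max w$ only makes explicit the multiplicity count $c_{uv}=w_{uv}$ that the paper takes for granted.
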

\begin{proof}
The result is a direct consequence of  the definition of equilibrium and the following equalities, where $w$ is the weight of $G$, given a vertex $(u,i)$ then
\begin{equation}\label{fundeq}
\sum_{(u,j) \in S_k(G)_{(v,i)}} \hspace{-5mm}\sin(\theta_{(u,i)}^\sigma - \theta_{(v,i)}^\sigma)=
\sum_{u \in G_v}\sum_{ j=1}^{w_{uv}-1} \sin(\theta_u - \theta_v)
=\sum_{u \in G_v} w_{uv}\sin(\theta^\sigma_u - \theta^\sigma_v) = 0.
\end{equation}
\end{proof}
Let us notice that this proposition can be extended to a much more general constructions.
For instance, if each vertex $v$ of the graph is substituted by a graph with order greater than the maximum degree of the vertices adjacent with $v$, the equations~\ref{fundeq}, will be still valid.
However, the  stability relationship between  both equilibria is weaker.

\begin{proposition}\label{prop}
If   $\theta$ is an equilibrium of a weighted graph $G$ with weight $w$ and Hessian matrix $U''_{\theta}$, then, the eigenvalues of  the Hessian matrix $U''_{\theta^\sigma}$ of $S_k(G)$  in $\theta^\sigma$ are   the  eigenvalues of   $U''_{\theta}$  with the same multiplicity plus a set of $(k-1)|G|$ (linearly independent) eigenvalues. These last eigenvalues are  positive for $k$ greater than twice the weight of the graph, i.e. $k > 2w(G)$.
In particular, the algebraic connectivity of $\theta$ in $G$ and  $\theta^\sigma$ in $S_k(G)$ have the same sign for $k$ large enough. 
\end{proposition}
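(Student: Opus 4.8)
The plan is to exhibit the Hessian $U''_{\theta^\sigma}$ explicitly in block form with respect to the natural decomposition of $\R^{|G\times\Z_k|}$ into the $|G|$ "fibers" $\{(u,i):i\in\Z_k\}$, and then use a change of basis within each fiber that separates the constant (averaged) direction from its orthogonal complement. First I would compute, using \eqref{U''} and the definition of $S_k(G)$, the entries of $U''_{\theta^\sigma}$: for two vertices $(u,i)$ and $(v,j)$ with $u\neq v$ adjacent in $G$, the off-diagonal entry is $-\cos(\theta_u-\theta_v)$ if $j\in i+\N_{w_{uv}}$ and $0$ otherwise; within a fiber ($u=v$, $i\neq j$) the entry is $-1$ (the clique edges); and the diagonal entry at $(u,i)$ is $(k-1)+\sum_{v\in G_u}w_{uv}\cos(\theta_u-\theta_v)$, where the first term comes from the intra-fiber clique and the $w_{uv}$ counts the neighbours of $(u,i)$ in the fiber over $v$. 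The key structural observation, already packaged in \eqref{fundeq}, is that the off-fiber adjacency pattern is "circulant in the $\Z_k$ coordinate": the $k\times k$ block coupling fiber $u$ to fiber $v$ is the circulant matrix $-\cos(\theta_u-\theta_v)\,C_{uv}$ where $C_{uv}$ is the $0/1$ circulant whose first row is the indicator of $\N_{w_{uv}}$ shifted appropriately, and its row sums all equal $w_{uv}$.

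Next I would diagonalize in the fiber direction. Let $P$ be the $k\times k$ matrix of the $\Z_k$-Fourier basis (or, more elementarily, the orthogonal basis consisting of $\one_k/\sqrt k$ together with any orthonormal basis of $\one_k^\perp$). Conjugating $U''_{\theta^\sigma}$ by $I_{|G|}\otimes P$ block-diagonalizes every circulant $k\times k$ block simultaneously. In the "frequency $0$" component (the $\one_k$ direction in each fiber) each block $-\cos(\theta_u-\theta_v)\,C_{uv}$ contributes its row sum $-w_{uv}\cos(\theta_u-\theta_v)$, the intra-fiber clique $-J_k+(k-1)I_k$ contributes $0$ on $\one_k$, and the diagonal keeps $\sum_v w_{uv}\cos(\theta_u-\theta_v)$; so the frequency-$0$ sub-block is exactly $U''_\theta$. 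This gives the first assertion: the spectrum of $U''_\theta$ appears inside that of $U''_{\theta^\sigma}$ with the same multiplicities. For each nonzero frequency $\chi$ (there are $k-1$ of them, each contributing a block of size $|G|$, for $(k-1)|G|$ extra eigenvalues in total, and they are linearly independent because the $I_{|G|}\otimes P$ conjugation is a genuine change of basis), the intra-fiber clique now contributes $+k$ on the diagonal (since $-J_k+(k-1)I_k$ acts as $kI$ on $\one_k^\perp$), while the off-fiber circulant blocks contribute $-\cos(\theta_u-\theta_v)\,\widehat{C_{uv}}(\chi)$ and the remaining diagonal term contributes $\sum_v w_{uv}\cos(\theta_u-\theta_v)$; the former is a Gershgorin-type perturbation whose operator norm I would bound.

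The estimate I would run: for the frequency-$\chi$ block $M_\chi$, write $M_\chi = kI_{|G|} + R_\chi$ where $R_\chi$ collects the diagonal terms $\sum_v w_{uv}\cos(\theta_u-\theta_v)$ and all the off-diagonal Fourier-coefficient terms. Each diagonal entry of $R_\chi$ has absolute value at most $\sum_v w_{uv} \le w(G)$ (summing weights at a single vertex), and the sum of absolute values of the off-diagonal entries in row $u$ is at most $\sum_{v}|\widehat{C_{uv}}(\chi)|\le \sum_v w_{uv}\le w(G)$ since $|\widehat{C_{uv}}(\chi)|$, being a sum of $w_{uv}$ unit-modulus terms, is at most $w_{uv}$. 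Hence $\|R_\chi\|\le 2w(G)$ by Gershgorin (absolute row sums bounded by $2w(G)$), so every eigenvalue of $M_\chi$ is at least $k-2w(G)>0$ once $k>2w(G)$. This proves the extra $(k-1)|G|$ eigenvalues are positive. Finally, the algebraic connectivity of $\theta^\sigma$ is the second-smallest eigenvalue on $\one_{|G\times\Z_k|}^\perp$; since all the new eigenvalues are positive and strictly larger than anything problematic, the sign of $a(U''_{\theta^\sigma})$ is governed by the second-smallest eigenvalue of $U''_\theta$ on $\one_{|G|}^\perp$, i.e. by $a(U''_\theta)$—one only has to check that the global constant vector $\one_{|G\times\Z_k|}$ lives in the frequency-$0$ component and corresponds to $\one_{|G|}$ there, so "removing $\one$" on both sides is compatible with the block decomposition.

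I expect the main obstacle to be bookkeeping rather than conceptual: making the circulant structure of the coupling blocks fully precise when $w_{uv}$ is not the same for all edges (so the circulants have different bandwidths and different "phase shifts" coming from the $i+\N_{w_{uv}}$ convention), and verifying that the simultaneous block-diagonalization really does decouple the frequencies despite these different shifts — it does, because every block is circulant and all circulants of the same size share the Fourier eigenbasis regardless of their first row. The only quantitative point needing care is the norm bound on $R_\chi$; I would state it via Gershgorin to avoid a finer spectral argument, which is why the threshold comes out as $k>2w(G)$ rather than something sharper.
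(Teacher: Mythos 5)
Your proposal is correct, and it reaches the same threshold $k>2w(G)$ as the paper, but it organizes the argument differently. The paper does not block-diagonalize: it takes an arbitrary eigenvector $x$ of $U''_{\theta^\sigma}$, forms the fiber sums $X_u=\sum_{j\in\Z_k}x_{(u,j)}$, and shows by summing the eigenvalue equations over $i$ that either $(X_u)_u$ is an eigenvector of $U''_{\theta}$ with the same eigenvalue, or $X_u\equiv 0$; in the latter case it evaluates the eigenvalue equation at a maximum-modulus component to get $\lambda> k-2w(G)$, and counts the zero-sum subspace to get the dimension $(k-1)|G|$. Your fiber-constant versus fiber-mean-zero splitting is exactly the same underlying decomposition (the frequency-$0$ component is the paper's $X_u$, and the max-component estimate is the $\ell^\infty$/Gershgorin bound in disguise), but you go further by exploiting that every inter-fiber coupling block is circulant in the $\Z_k$ coordinate, conjugating by $I_{|G|}\otimes P$ with $P$ the discrete Fourier matrix, and applying Gershgorin to each of the $k-1$ Hermitian frequency blocks $M_\chi=kI+R_\chi$. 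What your route buys is an explicit description of the extra $(k-1)|G|$ eigenvalues as the spectra of the $|G|\times|G|$ blocks $M_\chi$, which could in principle yield sharper bounds through the actual Fourier coefficients $\widehat{C_{uv}}(\chi)$ (indeed, the paper's final theorem has to rerun its estimate by hand for a specific graph because $k>2w(G)$ fails there, and your finer block structure would serve the same purpose); what the paper's route buys is brevity and the avoidance of complex Fourier machinery, since the crude bound suffices for the proposition as stated. Two small points to keep tidy in your write-up: the blocks $C_{uv}$ and $C_{uv}^{T}=C_{vu}$ are circulants with complex, mutually conjugate Fourier eigenvalues, so you should note that each $M_\chi$ is Hermitian (being a diagonal block of a unitary conjugate of a real symmetric matrix) before concluding its Gershgorin discs give real eigenvalues bounded below by $k-2w(G)$; and the bound $\lvert\widehat{C_{uv}}(\chi)\rvert\le w_{uv}$ together with $\sum_{v\in G_u}w_{uv}\le w(G)$ is exactly what makes your radius estimate match the paper's $2w(G)$.
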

\begin{proof}
From Eq.~\ref{U''}, the matrix $U''_{\theta^\sigma}$ has elements
$$
(U''_{\theta^\sigma})_{(u,i)(v,j)} = \begin{cases}
	k-1+\sum_{v \in G_u}w_{uv}\cos(\theta_u-\theta_v), & u=v, i = j,\\
	-1 &  u = v, i \neq j,\\
	-\cos(\theta^\sigma_{(u,i)}-\theta^\sigma_{(v,j)}) =-\cos(\theta_u-\theta_v)  & u \sim v, j \in i+\N_{w_uv},\\
	0 & \text{otherwise}.
\end{cases}
$$
Let $x$ be an eigenvector of $U''_{\theta^\sigma}$ with eigenvalue $\lambda$.
Therefore,
\begin{align*}
\lambda x_{(u,i)} &= (U''_{\theta^\sigma} x)_{(u,i)} =
\sum_{(v,j) \in S_k(G)_{(u,i)}} (U''_{\theta^\sigma})_{(u,j),(v,i)} x_{(v,i)}  \\
&= \left(k-1+\sum_{v \in G_u} w_{uv}\cos(\theta_{u}-\theta_{v})\right) x_{(u,i)}
-\sum_{j\in \Z_k\setminus\{i\}}  x_{(v,j)}
-\sum_{(v,i) \in S_k(G)_{(u, j)}} \cos(\theta_{u}-\theta_{v}) x_{(v,j)}.
\end{align*}
Calling  $X_v$ to the sum  $\sum_{j\in \Z_k} x_{(v,j)}$, we have
\begin{align} \label{eqlambda}
\lambda x_{(u,i)} &= \left(k-1+(U''_\theta)_{uu}\right) x_{(u,i)}
-(X_u - x_{(u,i)})
-\sum_{v \in G_u} \cos(\theta_{u}-\theta_{v}) \sum_{j\in i+\N_{w_{uv}}}x_{(v,j)},\nonumber\\
&= \left(k+(U''_\theta)_{uu}\right) x_{(u,i)}
-X_u 
-\sum_{v \in G_u} \cos(\theta_{u}-\theta_{v}) \sum_{j\in i+\N_{w_{uv}}}x_{(v,j)}.
\end{align}
 If we sum these equations  for all $i\in \Z_k$   we obtain
 \begin{align*}
 \lambda X_u =
\left(k+(U''_\theta)_{uu}\right) X_u
-kX_u
-\sum_{v \in G_u}  \cos(\theta_u-\theta_v) w_{uv}X_v \\
\end{align*}
which after canceling the $(k-1)$'s, we obtain
 \begin{align*}
 \lambda X_u =
(U''_\theta)_{uu}X_u
-\sum_{v \in G_u} w_{uv} \cos(\theta_u-\theta_v) X_v.
\end{align*}
Therefore, $\lambda$ is an eigenvalue of $G$, with eigenvector $(X_u)_{u \in VG}$, unless $X_u=0$ for all $u$.
Conversely, let $(x_u)_{u \in VG}$ be an eigenvector of $U''_\theta$, with eigenvalue $\lambda$, then $x_{(u,i)}=x_u$ gives rise an eigenvalue of $U''_{\theta^\sigma}$ with eigenvalue $\lambda$.
Let us return to the case of $X_u=0$ for all $u$.  
Plugin $X_u=0$ into  Eq.~\ref{eqlambda} we get
\begin{align*}
\lambda x_{(u,i)} &=  \left(k+\sum_{v:uv \in EG} w_{uv}\cos(\theta_{u}-\theta_{v})\right) x_{(u,i)}
-\sum_{v:uv \in EG} \cos(\theta_{u}-\theta_{v}) \sum_{j\in i+\N_{w_{uv}}}x_{(v,j)}.
\end{align*}
Let $(u,i)$ be the greatest component of $x$ in absolute value, that we can suppose w.l.o.g. that it is 1, i.e. $x_{(u,i)}=1$ and $|x_{(v,j)}| \leq 1$ for all $(v,j)$. Then 
\begin{align*}
\lambda &=  k+\sum_{v\in G_u} w_{uv}\cos(\theta_{u}-\theta_{v})
-\sum_{v \in G_u} \cos(\theta_{u}-\theta_{v}) \sum_{j\in i+\N_{w_{uv}}}x_{(v,j)}\\
&=  k+\sum_{v \in G_u} \left(w_{uv}-\sum_{j\in i+\N_{w_{uv}}}x_{(v,j)}\right)\cos(\theta_{u}-\theta_{v}) > k -2w(G),
\end{align*}
which is positive if $k$ is large enough, at most twice the weight of $G$. Since the space of solutions of the system of equations $X_u=0$ for $u\in G$, has dimension $(k-1)|G|$, we conclude the proof.

\end{proof}

\section{The complexity of determining whether a simple graph  have a non-zero stable equilibrium. }

In 2015 R. Taylor  proved that \emph{determining whether weighted Kuramoto models have a non-zero linearly stable equilibrium is NP-hard}\cite{Taylor2015}. However, Taylor left open the unweighted case, though reducing it  to a problem he believed to  be NP-Hard.
 At the light of the previous sections, the $k$-spinning of the graph built by Taylor with $k$ large enough, will have a linearly stable equilibrium iff the graph has one, so determining if  the $k$-spinning has a stable equilibrium is as hard as determining if the graph does.Therefore we have
 \begin{theorem}
 Determining whether an homogeneous  Kuramoto model has a non consensus linearly stable equilibrium is NP-hard.
\end{theorem}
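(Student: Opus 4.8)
The plan is to reduce the known NP-hard weighted problem, established by Taylor~\cite{Taylor2015}, to the unweighted problem by way of the $k$-spinning construction. Taylor's result gives a polynomial-time construction that, from an instance of some NP-hard problem, produces a weighted Kuramoto model (equivalently, a weighted graph $G$ with integer weights) such that deciding whether $G$ has a non-consensus linearly stable equilibrium is NP-hard. I would first fix such a family of hard instances $G$ and observe that all weights are bounded by a polynomial in the size of the instance, so that $w(G)$ itself is polynomially bounded; this is what makes the reduction run in polynomial time.

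Next I would pass to the $k$-spinning $S_k(G)$ for a suitable $k$. Choose $k$ to be any integer with $k \ge \max_v w_v$ and $k > 2w(G)$ — for concreteness one may take $k = 2w(G)+1$ — which is computable in polynomial time and yields a simple (unweighted) graph $S_k(G)$ of order $k|G|$, again polynomial in the instance size. By Proposition~1, $\theta$ is an equilibrium of $G$ if and only if $\theta^\sigma$ is an equilibrium of $S_k(G)$. By Proposition~\ref{prop}, since $k > 2w(G)$, the spectrum of $U''_{\theta^\sigma}$ consists of the spectrum of $U''_\theta$ together with $(k-1)|G|$ strictly positive eigenvalues; hence the algebraic connectivity $a(U''_{\theta^\sigma})$ equals $a(U''_\theta)$ (or at least has the same sign, which is all that matters), so $\theta^\sigma$ is linearly stable in $S_k(G)$ precisely when $\theta$ is linearly stable in $G$. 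Moreover $\theta^\sigma$ is a consensus of $S_k(G)$ if and only if $\theta$ is a consensus of $G$.

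The one gap to close is that a priori $S_k(G)$ could have linearly stable non-consensus equilibria that do \emph{not} arise as $\theta^\sigma$ for an equilibrium $\theta$ of $G$; if so, a ``yes'' answer for $S_k(G)$ would not immediately give a ``yes'' for $G$. I would handle this by arguing that it does not affect the reduction: what we need is an equivalence of the decision problems, and that is delivered directly by the forward direction together with the structure of Taylor's instances. Concretely, in Taylor's construction the weighted graph $G$ is designed so that it has a linearly stable non-consensus equilibrium if and only if the underlying combinatorial instance is a ``yes'' instance; and for \emph{that} instance we only use the implication ``$G$ has such an equilibrium $\Rightarrow$ $S_k(G)$ has one (namely $\theta^\sigma$)'' together with ``$S_k(G)$ has no such equilibrium whenever $G$ has none,'' the latter being the content one must verify. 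Thus I expect the main obstacle to be precisely this verification: showing that for $k > 2w(G)$ every non-consensus linearly stable equilibrium of $S_k(G)$ projects, via the quotient $S_k(G)/R = G$, to a non-consensus linearly stable equilibrium of $G$ — equivalently, that no ``new'' stable equilibria are created by spinning. One route is to show that at any equilibrium $\psi$ of $S_k(G)$ the coordinates within each fibre $\{(u,i): i \in \Z_k\}$ must be equal (using that the fibre induces a complete graph $K_k$ with large relative weight compared to the cross-edges, so any intra-fibre spread costs too much potential energy to be a stable configuration), forcing $\psi = \theta^\sigma$ with $\theta$ an equilibrium of $G$; the stability transfer then follows from Proposition~\ref{prop} applied in reverse. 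Once this is in place, the polynomial-time map $G \mapsto S_k(G)$ is a valid Karp reduction from Taylor's NP-hard weighted problem to the unweighted one, and NP-hardness of the latter follows.
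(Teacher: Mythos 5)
Your reduction is exactly the paper's: compose Taylor's weighted construction with the $k$-spinning for $k$ larger than twice the (polynomially bounded) total weight, and invoke Proposition~1 and Proposition~\ref{prop} to transfer non-consensus equilibria and their linear stability from $G$ to $S_k(G)$. The paper's proof is literally this one-liner: ``the $k$-spinning of the graph built by Taylor with $k$ large enough will have a linearly stable equilibrium iff the graph has one.''

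The reverse implication you single out --- that $S_k(G)$ might possess linearly stable non-consensus equilibria that are not lifts $\theta^\sigma$, so a ``yes'' for $S_k(G)$ need not force a ``yes'' for $G$ --- is a real issue, and the paper does not address it either; it simply asserts the equivalence, while Propositions~1 and~\ref{prop} only give the forward direction. So you have matched the paper and, to your credit, located its weak point; but your sketched repair does not close it. Arguing that ``intra-fibre spread costs too much potential energy'' cannot rule out other stable equilibria, because linear stability is a local property and local minima are not excluded by global energy comparisons; and the fibre vertices $(u,i)$, $(u,j)$ are \emph{not} twins in $S_k(G)$ (the cross-edges $(u,i)(v,j)$ with $j\in i+\N_{w_{uv}}$ depend on the fibre index), so the standard twin-vertex phase-locking lemmas that work for the Kronecker product with $K_k$ do not apply directly. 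A complete proof would need an actual lemma showing that, for $k$ large enough, every linearly stable equilibrium of $S_k(G)$ has constant fibres (or some other argument recovering a stable equilibrium of $G$ from one of $S_k(G)$); as it stands, both your write-up and the paper leave that step unproved.
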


\section{A new  Lower Bound for the critical connectivity}

Let us consider the family of weighted graphs  $G_p$ with vertex set $\Z_8$ and edges 
 $EG_p = E_a \cup E_b \cup E_c \cup E_d $ where:
\begin{align*}
E_a =& \{(2i)(2i+1):\:  i \in \N_4\}, \\
E_b =& \{ i(i+2):\: i \in \Z_8\} ,\\
E_c =& \{ (2i)(2i+2):\: i \in \N_4 \},\\
E_d =& \{\{ (2i)(2i+4):\:  i \in \N_4\},
\end{align*}
with weight $w(uv) = \mathrm{t}$ if $uv \in E_\mathrm{t}$. Figure~\ref{Weighted and spinning graph} shows $G_{(2,2,2,1)}$.

\begin{figure}[htbp]
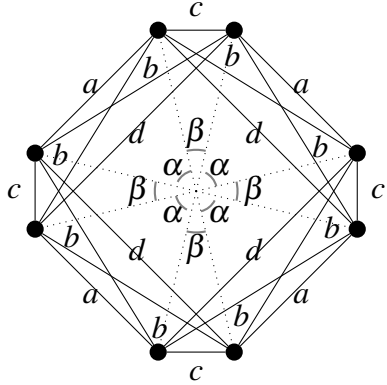

\begin{center}
\equilibrioalfa{1.1}{2}{13.2826}
\caption{The weighted graph $G_p$ with $p=(a,b,c,d)$.}
\label{Gabcd}
\end{center}
\end{figure}
Let us find a linearly stable equilibrium  $\theta^*$ of $G_p$ of the form 
\begin{equation*}
 \theta^*_{2i+s} = \frac{\pi}2 i+ s \frac{\beta}2  \qquad  \forall i \in \N_4, \quad s=\pm1.
\end{equation*}
If $\alpha =\pi/2-\beta$ the condition for $\theta^*$ to be an  equilibrium is:
$$
a\sin(\alpha) = c \sin(\beta) + d\sin(\alpha+2\beta ),
$$
since the edges of type  $b$ compensate each other, see Figure~\ref{Gabcd}.
Taking in count that   $\alpha + \beta = \pi/2$, then $\sin(\beta) = \cos(\alpha)$,  $\sin(\alpha+2\beta )= \sin(\alpha)$
and previous  equation  is equivalent to 
$
a\sin(\alpha) = c \cos(\alpha) + d\sin(\alpha),
$
i.e.  $(a-d)\tan \alpha = c $ so 
$
\alpha = \arctan(c/(a-c)).
$
\begin{lemma}\label{lema}
The equilibria $\theta$ of weighted graph $G_p$ with $p = (a,b,c,d )$ is  linearly stable equilibrium if $c^2 > 2ad$.
\end{lemma}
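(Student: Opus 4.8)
The plan is to verify the linear-stability criterion from the Preliminaries directly: that the Hessian $U''_{\theta^*}$ of Eq.~\eqref{U''} has a simple zero eigenvalue, spanned by $\one$, and is positive definite on $\one^{\perp}$, i.e.\ $a(U''_{\theta^*})>0$. The engine of the whole computation is the $\Z_4$ symmetry of the pair $(G_p,\theta^*)$: the shift $v\mapsto v+2$ on $\Z_8$ (equivalently, the cyclic shift $i\mapsto i+1$ of the four pairs $\{2i,2i+1\}$) is an automorphism of $G_p$ which moves $\theta^*$ by the constant $\pi/2$, under which the potential $U$ is invariant. Hence $U''_{\theta^*}$ commutes with the associated permutation matrix and is block-circulant over $\Z_4$ with $2\times 2$ blocks, one block per pair.

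First I would tabulate the cosines $\cos(\theta^*_u-\theta^*_v)$ using $\alpha+\beta=\pi/2$ and the equilibrium relation $(a-d)\tan\alpha=c$ obtained above. Every $b$-edge joins vertices whose phase difference is $\pm\pi/2$, so its cosine vanishes and the $b$-edges contribute nothing to $U''_{\theta^*}$ — this is why $b$ is absent from the claimed inequality. The surviving cosines are $\sin\alpha$ on the $c$-edges (within a pair), $\cos\alpha$ on the $a$-edges and $\cos(\pi/2+\beta)=-\sin\beta=-\cos\alpha$ on the $d$-edges, and a short computation shows that every diagonal entry of $U''_{\theta^*}$ collapses to $c\sin\alpha+(a-d)\cos\alpha=c/\sin\alpha$. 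Writing $p=c\sin\alpha$, $q=a\cos\alpha$, $r=d\cos\alpha$, so that the common diagonal value is $\gamma:=p+q-r=c/\sin\alpha$, the three generating blocks are the diagonal block $D=\left(\begin{smallmatrix}\gamma&-p\\ -p&\gamma\end{smallmatrix}\right)$, the next-pair block $M=\left(\begin{smallmatrix}0&r\\ -q&0\end{smallmatrix}\right)$, the previous-pair block $M^{T}$, and the opposite-pair block $0$.

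Next I would block-diagonalise by the discrete Fourier transform over $\Z_4$: the spectrum of $U''_{\theta^*}$ is the union of the spectra of the Hermitian $2\times 2$ matrices $\widehat U(\omega)=D+\omega M+\bar\omega M^{T}$ for $\omega\in\{1,i,-1,-i\}$. One gets, for $\omega=1$, the matrix $\gamma\left(\begin{smallmatrix}1&-1\\ -1&1\end{smallmatrix}\right)$ with eigenvalues $0$ (the $\one$ direction) and $2\gamma>0$; for $\omega=-1$, eigenvalues $2p$ and $2(q-r)$; and for $\omega=\pm i$, eigenvalues $\gamma\pm\sqrt{p^{2}+(q+r)^{2}}$. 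Thus, apart from the unavoidable zero, all eigenvalues are positive exactly when $p>0$, $q-r>0$ and $\gamma^{2}>p^{2}+(q+r)^{2}$. The first two hold automatically, since $\alpha=\arctan\!\big(c/(a-d)\big)\in(0,\pi/2)$ makes $p=c\sin\alpha>0$ and $q-r=(a-d)\cos\alpha>0$. Expanding the third with $\gamma=p+q-r$ reduces it to $p(q-r)>2qr$; substituting $p=c\sin\alpha$, $q-r=(a-d)\cos\alpha$, $qr=ad\cos^{2}\alpha$, dividing by $\cos^{2}\alpha>0$ and using $(a-d)\tan\alpha=c$ turns it precisely into $c^{2}>2ad$. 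This finishes the argument.

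The work is entirely bookkeeping rather than insight: getting the signs right in the cosine table — the $d$-edges carry the extra minus sign $\cos(\pi/2+\beta)=-\sin\beta$, and that sign is exactly what later produces the factor $2$ in $2ad$ — checking that the diagonal genuinely collapses to the single value $c/\sin\alpha$, and keeping the four Fourier blocks and their eigenvalues straight. Once the block-circulant reduction is set up, everything else is the elementary $2\times 2$ spectral computation above together with a one-line use of the equilibrium identity, so that is where I would expect any slip to occur.
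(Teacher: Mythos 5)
Your proof is correct, and it reaches exactly the paper's spectrum by a different computational route. The paper scales the Hessian by $f/\cos\alpha$ (with $f=a-d$), writes out the full $8\times 8$ matrix, and simply states its eigenvalues $0$, $2f^2$, $2c^2$, $2c^2+2f^2$, $c^2+f^2\pm\sqrt{4a^2f^2-4af^3+c^4+f^4}$ (the last two doubled), deferring verification to an appendix listing explicit eigenvectors; the stability condition then drops out of the smallest eigenvalue. You instead exploit the $\Z_4$ symmetry to see the Hessian as block-circulant with $2\times2$ blocks $D$, $M$, $0$, $M^{T}$ and diagonalize by the discrete Fourier transform, reducing everything to four Hermitian $2\times2$ problems. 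The two spectra agree: with $p=c\sin\alpha$, $q+r=(a+d)\cos\alpha$, $q-r=f\cos\alpha$, $\gamma=p+q-r=c/\sin\alpha$, one checks e.g.\ that $\gamma\pm\sqrt{p^2+(q+r)^2}$ equals $\bigl(c^2+f^2\pm\sqrt{4a^2f^2-4af^3+c^4+f^4}\bigr)\cos\alpha/f$ using $c=f\tan\alpha$ and $(2a-f)^2=(a+d)^2$, and your reduction $\gamma^2>p^2+(q+r)^2\iff p(q-r)>2qr\iff c^2>2ad$ is the same inequality the paper obtains. Your route buys a self-contained, checkable derivation of the eigenvalues (no appendix of eigenvectors needed) and makes transparent why $b$ disappears and where the factor $2$ in $2ad$ comes from; the paper's route is more brute-force but requires no representation-theoretic setup. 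One shared caveat: like the paper, you implicitly assume $a>d$ (and $c>0$) so that $\alpha=\arctan\bigl(c/(a-d)\bigr)\in(0,\pi/2)$, $p>0$ and $q-r>0$; this is harmless for the application ($a=k$, $d=\lceil k/2\rceil-1$), but it is an assumption in both arguments, not a consequence of $c^2>2ad$. Also note your letter $r=d\cos\alpha$ clashes with the paper's appendix use of $r$ for the square root, so rename one if you merge the texts.
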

\begin{proof}
The matrix of $U''_{\theta}$ has elements:
\begin{equation*}
(U''_{\theta})_{uv}=
\begin{cases}
- w_{uv}\cos(\theta_u-\theta_v) & u\neq v, \\
\sum_{u\sim v} w_{uv}\cos(\theta_u-\theta_v) & u = v,\\
0& \text{ otherwise,}
\end{cases} 
\end{equation*}
thus
\begin{equation*}
(U''_{\theta})_{uv}=
\begin{cases}
- a\cos(\alpha) & u v \in E_a, \\
- b\cos(\alpha+\beta) & u v \in E_b, \\
- c\cos(\beta) & u v \in E_c, \\
- d\cos(\alpha+2\beta) & u v \in E_d, \\
a\cos(\alpha)+b\cos(\alpha+\beta)+c\cos(\beta)+d\cos(\alpha+2\beta) & u = v,\\
0& \text{ otherwise .}
\end{cases}\\
\end{equation*}
\begin{equation*}
(U''_{\theta})_{uv}=
\begin{cases}
- a\cos \alpha & u v \in E_a, \\
0  & u v \in E_b, \\
- c\sin \alpha & u v \in E_c, \\
d\cos \alpha & u v \in E_d, \\
a\cos\alpha+c\sin\alpha-d\cos\alpha & u = v,\\
0& \text{ otherwise .}
\end{cases}
\end{equation*}
Extracting $\cos \alpha$ as a factor, we obtain 
\begin{align*}
(U''_{\theta})_{uv}
&=\cos\alpha
\begin{cases}
- a & u v \in E_a, \\
- c\tan \alpha & u v \in E_c, \\
d & u v \in E_d, \\
a+c\tan\alpha-d & u = v,\\
0& \text{ otherwise .}
\end{cases}
=\cos\alpha
\begin{cases}
- a & u v \in E_a, \\
- \frac{c^2}{a-d} & u v \in E_c, \\
d & u v \in E_d, \\
a+\frac{c^2}{a-d}-d & u = v,\\
0& \text{ otherwise .}
\end{cases}
\end{align*}
Extracting $1/(a-d)$ as a factor, and setting $f=a-d$, we obtain 
\begin{align*}
(U''_{\theta})_{uv}
&=\frac{\cos\alpha}f
\begin{cases}
- af & u v \in E_a, \\
-c^2 & u v \in E_c, \\
df & u v \in E_d, \\
f^2+c^2 & u = v,\\
0& \text{ otherwise .}
\end{cases}
\end{align*}
i.e.
$$
\frac{f}{\cos\alpha}U''_{\theta}=\left( \begin {array}{cccccccc} 
c^2+f^2  &  -af  &  0  &  0  &  0  &  af-f^2  &  0  &  -c^2\\ 
-af  &  c^2+f^2  &  -c^2  &  0  &  af-f^2  &  0  &  0  &  0\\
0  &  -c^2  &  c^2+f^2  &  -af  &  0  &  0  &  0  &  af-f^2\\ 
0  &  0  &  -af  &  c^2+f^2  &  -c^2  &  0  &  af-f^2  &  0\\
 0  &  af-f^2  &  0  &  -c^2  &  c^2+f^2  &  -af  &  0  &  0\\ 
 af-f^2  &  0  &  0  &  0  &  -af  &  c^2+f^2  &  -c^2  &  0\\ 
 0  &  0  &  0  &  af-f^2  &  0  &  -c^2  &  c^2+f^2  &  -af\\ 
 -c^2  &  0  &  af-f^2  &  0  &  0  &  0  &  -af  &  c^2+f^2
\end {array} \right).
$$
The eigenvalues of this matrix are 0, $2f^2$, $2c^2$, $2c^2+2f^2$,
 $c^2+f^2+\sqrt{4a^2f^2-4af^3+c^4+f^4}$ with multiplicity two  and  $c^2+f^2-\sqrt{4a^2f^2-4af^3+c^4+f^4}$  with multiplicity two too (see Appendix~\ref{appendix}). Besides the null eigenvalue corresponding to the vector $\one$, the other eigenvalues  are positive but the last one, that will be positive iff  $2c^2f^2 > 4a^2f^2-4af^3$, i.e., if $c^2 > 2a^2-2af=2ad$, as we wanted to prove.
\end{proof}
We are now in conditions to find a lower bound for the critical connectivity by making $k$-spinnings of the graphs $G_p$ with a special choice of $p=(a,b,c,d)$.
%
Figure~\ref{Weighted and spinning graph} shows  the $2$-spinning of graph $G_p$ for  $p= (2,2,2,1)$.
We can see that the graphs $S_k(G_p)$  are  $\delta$-regular graphs  with $ \delta = a+2b+d+c+(k-1),$
and orders $|S_k(G_p)| = 8k$, so,  these graphs have strong density
$$
\mu(S_k(G_p)) = \frac{a+2b+d+c+(k-1)}{8k}.
$$
We will choose parameters $a,b,c,d$ proportional to $k$, so the strong density will tend to a constant.
Unfortunately, that choice does not verify the hypothesis of  Proposition~\ref{prop}, but a minor change in the arguments given in the proof of this proposition will be enough to  arrive at the same conclusion.
\begin{theorem}
The critical connectivity is  at least  11/16.
\end{theorem}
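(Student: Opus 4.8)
\emph{The plan.} I would exhibit, for every large $k$, a $k$-spinning $S_k(G_p)$ that carries a non-consensus linearly stable equilibrium — hence does not synchronize — and whose strong density tends to $11/16$ from below. Concretely, take $p=(a,b,c,d)=\big(k,\,k,\,k,\,\lfloor(k-1)/2\rfloor\big)$. Then $d<k=\max w$, so $S_k(G_p)$ is defined, and $2ad=2k\lfloor(k-1)/2\rfloor\le k(k-1)<k^2=c^2$, so by Lemma~\ref{lema} the equilibrium $\theta^*$ of $G_p$ is linearly stable; it is non-consensus since $\alpha=\arctan\!\big(c/(a-d)\big)\in(0,\tfrac{\pi}{2})$. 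Equivalently, $U''_{\theta^*}$ has a simple $0$ eigenvalue (the vector $\one$) and all other eigenvalues strictly positive. Since $S_k(G_p)$ is $(a+2b+c+d+k-1)$-regular on $8k$ vertices, its strong density is
\[
\mu\big(S_k(G_p)\big)=\frac{5k+\lfloor(k-1)/2\rfloor-1}{8k}\ <\ \frac{11}{16},\qquad\text{while}\qquad \mu\big(S_k(G_p)\big)\ \longrightarrow\ \frac{11}{16}\ \ (k\to\infty).
\]
Within this family $11/16$ is the best one can do: maximizing $a+2b+c+d$ subject to $a,b,c,d\le k$ and to the stability constraint $c^2>2ad$ of Lemma~\ref{lema} forces $a=b=c=k$ and $d$ approaching $k/2$.

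\emph{Lifting the equilibrium; why Proposition~\ref{prop} is not directly available.} Since $\theta^*$ is an equilibrium of $G_p$, its lift $\theta^{*\sigma}$ is an equilibrium of $S_k(G_p)$ (by the equilibrium-lifting Proposition above), and it remains to prove that $\theta^{*\sigma}$ is linearly stable. Proposition~\ref{prop} cannot be quoted verbatim: our weights satisfy $k\le 2w(G_p)$, so the estimate ``$\lambda>k-2w(G_p)$'' used at the end of its proof is vacuous. Instead I would repeat the eigenvector computation of that proof unchanged through Eq.~\eqref{eqlambda}. As there, every eigenvalue of $U''_{\theta^{*\sigma}}$ is either an eigenvalue of $U''_{\theta^*}$ — all of which are $\ge 0$, with a single $0$, by the previous step — or it arises from the $8(k-1)$-dimensional invariant subspace on which $X_u:=\sum_{j\in\Z_k}x_{(u,j)}=0$ for every $u$. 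For an eigenvector $x$ of the latter kind, choosing $(u,i)$ at which $|x|$ attains its maximum and normalizing to $x_{(u,i)}=1$ (so $|x_{(v,j)}|\le 1$ for all $(v,j)$), Eq.~\eqref{eqlambda} with all $X_u=0$ becomes
\[
\lambda=k+\sum_{v\in G_u}\Big(w_{uv}-\sum_{j\in i+\N_{w_{uv}}}x_{(v,j)}\Big)\cos\big(\theta^*_u-\theta^*_v\big).
\]

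\emph{The sharpened estimate — the crux.} The ``minor change'' is this. Each bracket $w_{uv}-\sum_{j\in i+\N_{w_{uv}}}x_{(v,j)}$ is a sum of $w_{uv}$ reals, each at most $1$, hence it lies in $[0,2w_{uv}]$ and in particular is nonnegative; and, reading off the Hessian from the proof of Lemma~\ref{lema}, at $\theta^*$ one has $\cos(\theta^*_u-\theta^*_v)=\cos\alpha\ge 0$ on the edges of $E_a$, $=0$ on those of $E_b$, $=\sin\alpha\ge 0$ on those of $E_c$, and $=-\cos\alpha<0$ only on the single $E_d$-edge at $u$, whose weight is $d$. Hence the displayed sum is at least $-2d\cos\alpha$, so $\lambda\ge k-2d\cos\alpha\ge k-2d>0$ because $2d<k$. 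Therefore $U''_{\theta^{*\sigma}}$ has a simple $0$ eigenvalue (the consensus direction) and all other eigenvalues positive; i.e.\ $\theta^{*\sigma}$ is a non-consensus linearly stable equilibrium and $S_k(G_p)$ does not synchronize. Since these graphs have strong densities below $11/16$ and converging to it, no $\mu<11/16$ can be a threshold beyond which every graph synchronizes, so the critical connectivity is at least $11/16$. The one genuinely delicate point is this last estimate: one must not bound the $x_{(v,j)}$ by $1$ uniformly over all neighbours of $u$ — that only yields the vacuous $\lambda>k-2w(G_p)$ — but instead exploit that at $\theta^*$ a negative cosine occurs solely along the light $E_d$-edges; the rest (the weight optimization, the density computation, and the eigenvalue bookkeeping inherited from Lemma~\ref{lema} and Proposition~\ref{prop}) is routine.
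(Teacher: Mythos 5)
Your proposal is correct and takes essentially the same route as the paper: the same family $S_k(G_{p_k})$ with $p_k=(k,k,k,\lceil k/2\rceil-1)$ (your $\lfloor (k-1)/2\rfloor$ is the same number), Lemma~\ref{lema} for stability of the equilibrium of $G_{p_k}$, the same density computation tending to $11/16$, and a rerun of the tail of Proposition~\ref{prop}'s proof on the $X\equiv 0$ subspace. The only (harmless) variation is the final estimate: the paper cancels the $E_a$ and $E_c$ contributions exactly because $a=c=k$ forces the inner sums to equal $X_v=0$, whereas you bound those terms below by $0$ via sign considerations and isolate the single $E_d$ edge, reaching $\lambda\ge k-2d\cos\alpha>0$ — the same conclusion under the same condition $2d<k$.
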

\begin{proof}
Consider the $k$-spinning graph $S_k(G_{p_k})$ of $G_{p_k}$ with $p_k=(k,k,k,\lceil k/2 \rceil-1)$, then 
$$
\mu(S_k(G_{p_k})) = \frac{4k+\lceil k/2 \rceil-1+k-1}{8k}  \to 11/16.
$$
Since  the choice of $a=b=c=k$ and $d=\lceil k/2 \rceil-1 $ verifies $c^2 =k^2 > 2k(\lceil k/2 \rceil-1)$, then, by the  Lemma ~\ref{lema} the equilibrium $\theta$ of the weighted graph $G_{p_k}$ is linearly stable.
We would like to apply Proposition~\ref{prop} to deduce that $G_{p_k}$ does not synchronize, but the hypothesis $k\geq w(G_{p_k})$ is not verified.
Nevertheless, we can prove that the $(k-1)|G|$ eigenvalues of $U''_{\theta^\sigma}$ that need this condition to be positive in the proof on Proposition~\ref{prop}, are still positive. 
Indeed, following the last part in  Proposition~\ref{prop}'s proof, let us consider the  formula for the eigenvalue $\lambda$:
\begin{align*}
\lambda &=  k+\sum_{v\in G_u} w_{uv}\cos(\theta_{u}-\theta_{v})
-\sum_{v\in G_u} \cos(\theta_{u}-\theta_{v}) \sum_{j\in i+\N_{w_{uv}}}x_{(v,j)}.
\end{align*}
In our case this formula turn out  to be
\begin{align*}
\lambda &=  k+  a\cos \alpha+c\sin \alpha-d\cos \alpha\\
&-\cos \alpha\sum_{j\in \Z_k} x_{(v,j)}
-\sin \alpha\sum_{j\in \Z_k} x_{(v',j)}
+\cos \alpha \sum_{j\in i+\N_d} x_{(v'',j)},
\end{align*}
where $uv \in E_a, uv' \in E_c, uv''\in E_d$. Then, since $a=c=k$ and $\sum_{j\in \Z_k} x_{(v,j)}=\sum_{j\in \Z_k} x_{(v',j)}=0$
\begin{align*}
\lambda &=  k+  k\cos \alpha+k\sin \alpha-d\cos \alpha 
+\cos \alpha \sum_{j\in i+\N_d} x_{(v'',j)}\\
&=  k+k\sin \alpha+\left(k-d+\sum_{j\in i+\N_d} x_{(v'',j)}\right)\cos \alpha,
\end{align*}
which is positive since  $|x_{(v'',j)}|\leq 1$ and $d=\lceil k/2\rceil-1$.
\end{proof}
It is worth to say that $a=b=c=k$ and $d=\lceil k/2\rceil-1$  maximize the strong density of the $k$-spinning graph  $G_{(a,b,c,d)}$ subject to the condition $c^2 > 2ad$, although it is not the only choice. 
We can also have chosen $d=b=c=k$ and $a=\lceil k/2\rceil-1$.

The first $k$ such that  the strong density $\mu_k$  is greater than the best lower bound known so far\cite{Yoneda}, i.e., $0.6838$,   is $k=34 $, that correspond to a graph with $272$ vertices, an order far away from exhaustive  numerical experiments.
The first $k$ such that  the strong density $\mu_k$  is greater than  than $0.6874$, is  $k=1250 $, that corresponds to a graph with  $10000$ vertices.

Although angles of $\pi/2$ in the equilibrium  contribute in other scenarios to   nonlinearity behavior\cite{townsend2020dense}, in our case, do not. In fact, the edges corresponding to that angle are those in $E_b$,  which  do not give rise to  constraints  of the equilibrium nor to the positive definition of the quadratic forms,  because $\cos \pi/2 =0$, so $b=k$ is always an optimal choice.  
However,

\section{Consequences }

We have develop a way to associate an unweighted graph from a weighted one such that the existence of linearly stable equilibria in the last one implies, under some conditions, the existence of such equilibria in the former.
This technique allows us to improve the known lower bound for the critical connectivity from 0.6838 to 0.6875.
We hope this technique will allow to  improve that limit to at least 0.7495, according to the intuitive  arguments given by Townsend et al.\cite{townsend2020dense}.
If these arguments are true, the gap from 0.7495 to 0.75 will require a totally different approach.
On the other hand we also were able to prove that deciding if a graph has non trivial linearly stable equilibria is an NP-Hard problem based on a result by Taylor\cite{Taylor2015} and the weighted-to-unweighted technique.
However the complexity of deciding if a graph synchronizes is still open with only partial results\cite{canale2010}, moreover, we do not even know if that problem is computable.
The last question is related to the absences  of a combinatorial characterization of synchronizing graphs.
This issue is  similar to the characterization of planar graphs, in the sense that both types of graphs, i.e., planar and synchronizing are defined in a non combinatorial way. Kazimierz Kuratowski gives a combinatorial characterization of the formers  in terms of forbidden induced subgraphs. Unfortunately, this particular way is not possible for synchronizing graphs, since for any fixed graph there are synchronized and unsynchronized ones with that graph as an induced subgraph\cite{Canale2008}.
The biggest step into a combinatorial characterization was  done by Taylor\cite{taylor2012there} when he found a connection between the minimum degree and the synchronization property, however we are still far away from a final
solution.

\section{Acknowledges}

The author thanks Alex Townsend for a useful conversation. 

\appendix
\section{Eigenvector of matrix in proof of Lemma~\ref{lema}\label{appendix}}
In this section we provide a base of eigenvectors of the scaled Hessian  matrix in proof of Lemma~\ref{lema}, i.e.
$$M=\left( \begin {array}{cccccccc} 
c^2+f^2  &  -af  &  0  &  0  &  0  &  af-f^2  &  0  &  -c^2\\ 
-af  &  c^2+f^2  &  -c^2  &  0  &  af-f^2  &  0  &  0  &  0\\
0  &  -c^2  &  c^2+f^2  &  -af  &  0  &  0  &  0  &  af-f^2\\ 
0  &  0  &  -af  &  c^2+f^2  &  -c^2  &  0  &  af-f^2  &  0\\
 0  &  af-f^2  &  0  &  -c^2  &  c^2+f^2  &  -af  &  0  &  0\\ 
 af-f^2  &  0  &  0  &  0  &  -af  &  c^2+f^2  &  -c^2  &  0\\ 
 0  &  0  &  0  &  af-f^2  &  0  &  -c^2  &  c^2+f^2  &  -af\\ 
 -c^2  &  0  &  af-f^2  &  0  &  0  &  0  &  -af  &  c^2+f^2
\end {array} \right).
$$
A basis of eigenvector of $M$ are the column vectors of  the following matrix,
$$V=\left( \begin {array}{cccccccc} -{\frac {r}{{c}^{2}}}&{\frac {-2\,af+{f}^{2}}{{c}^{2}}}
&{\frac {r}{{c}^{2
}}}&{\frac {-2\,af+{f}^{2}}{{c}^{2}}}&1&1&-1&-1\\ \noalign{\medskip}{
\frac {f \left( 2\,a-f \right) }{{c}^{2}}}&{\frac {r}{{c}^{2}}}&{\frac {f \left( 2\,a-f
 \right) }{{c}^{2}}}&-{\frac {r}{{c}^{2}}}&1&-1&-1&1\\ \noalign{\medskip}0&-1&0&-1&1&-1&
1&-1\\ \noalign{\medskip}-1&0&-1&0&1&1&1&1\\ \noalign{\medskip}{\frac 
{r}{{c}^{2}}}&{
\frac {f \left( 2\,a-f \right) }{{c}^{2}}}&-{\frac {r}{{c}^{2}}}&{\frac {f \left( 2\,a-f
 \right) }{{c}^{2}}}&1&1&-1&-1\\ \noalign{\medskip}{\frac {-2\,af+{f}^
{2}}{{c}^{2}}}&-{\frac {r}{{c}^{2}}}&{\frac {-2\,af+{f}^{2}}{{c}^{2}}}&{\frac {r}{{c}^{2}}}&1&-1&-1&1
\\ \noalign{\medskip}0&1&0&1&1&-1&1&-1\\ \noalign{\medskip}1&0&1&0&1&1
&1&1\end {array} \right)
$$
with $r = \sqrt {4\,{a}^{2}{f}^{2}-4\,a{f}^{3}+{c}^{4}+{f}^{4}}$, as can be checked.
Therefore $M\cdot V$  is a diagonal matrix with the corresponding eigenvalues of the eigenvector. 
More concretely,
$$
M\cdot V =  \left( \begin {array}{ccccccccc} 
{c}^{2}+{f}^{2}+r &0&0&0&0&0&0&0\\
0&{c}^{2}+{f}^{2}+r&0&0&0&0&0&0\\ 
0&0&{c}^{2}+{f}^{2}-r&0&0&0&0&0\\
0&0&0&{c}^{2}+{f}^{2}-r&0&0&0&0\\ 
0&0&0&0&0&0&0&0\\ 
0&0&0&0&0&2\,{f}^{2}&0&0\\ 
0&0&0&0&0&0&2\,{c}^{2}&0\\ 
0&0&0&0&0&0&0&2\,{c}^{2}+2\,{f}^{2}
\end {array} \right)
$$

\bibliography{BestLowerBound}

\end{document}